\newtheorem*{remark}{\bf Remark}
\newtheorem{theorem}{\bf Theorem}[section]
\newtheorem{proposition}[theorem]{\bf Proposition}
\newtheorem*{definition*}{\bf Definition}
\newtheorem*{claim}{\bf Claim}
\newtheorem{lemma}[theorem]{\bf Lemma}
\def\C{{\mathbb C}}
\def\N{{\mathbb N}}
\def\P{\mathbb{P}}
\def\supp{\textup{supp}}
\def\dist{\textup{dist}}
\def\top{\textup{top}}
\def\and{{\quad\text{and}\quad}}
\title{Entropy of meromorphic maps acting on analytic sets}
\author{Henry De Thélin}
\address{LAGA, UMR 7539, Institut Galilée, Université Paris 13, 99 avenue J.B. Clément, 93430 Villetaneuse, France}
\email{dethelin@math.univ-paris13.fr}
\author{Gabriel Vigny}
\address{LAMFA UMR 7352, Universit\'e de Picardie Jules Verne, 33 rue Saint-Leu, 80039 AMIENS Cedex 1, FRANCE}
\email{gabriel.vigny@u-picardie.fr}
\thanks{The second author's research is partially supported by the ANR grant Fatou ANR-17-CE40-0002-01.}
\begin{document}
\begin{abstract} Let $f : X\dasharrow X$ be a dominating meromorphic map on a compact Kähler manifold $X$ of dimension $k$. We extend the notion of topological entropy $h^l_{\mathrm{top}}(f)$ for the action of $f$ on (local) analytic sets of dimension $0\leq l \leq k$. For an ergodic probability measure $\nu$, we extend similarly the notion of measure-theoretic entropy $h_{\nu}^l(f)$.
	
	Under mild hypothesis, we compute $h^l_{\mathrm{top}}(f)$ in term of the dynamical degrees of $f$. In the particular case of endomorphisms of $\P^2$ of degree $d$, we show that $h^1_{\mathrm{top}}(f)= \log d$ for a large class of maps but we give examples where $h^1_{\mathrm{top}}(f)\neq \log d$.
\end{abstract}

\maketitle

\noindent \textbf{Keywords.} Entropy of rational maps, analytic sets, dynamical degrees
\medskip

\noindent \textbf{Mathematics~Subject~Classification~(2010):}
37B40, 37F10, 32Bxx.

\section{Introduction}
Consider a dynamical system $f : X\dasharrow X$ where $f$ is a dominating meromorphic map on a compact Kähler manifold $X$ of dimension $k$ endowed with a Kähler form $\omega$. A central question in the study of such  dynamical system is to compute the \emph{topological entropy} $h_{\top}(f)$ of $f$ and to construct a measure of maximal entropy.

The quantity  $h_{\top}(f)$ is related to the so-called \emph{dynamical degrees} $(d_l(f))_{0\leq l \leq k}$ of $f$. They are defined by (\cite{RS,DS5})
\[ d_l(f) = \lim_{n\to \infty} \left(  \int_X (f^n)^*(\omega^l) \wedge \omega^{k-l} \right)^\frac{1}{n} \]
and the $l$-th degree $d_l(f)$ measures the spectral radius of the action of the pull-back operator $f^*$ on the cohomology group $H^{l,l}(X)$.  It can be shown 
that the sequence of degrees is increasing up to a rank $l$ and then it is decreasing (see \cite{Gromov}). By \cite{gromov_entropy, DS5, DS9}, we always have  $h_{\top}(f) \leq \max_{0 \leq s \leq k} \log d_s$. In order to prove the reverse inequality, the strategy is to construct a measure of maximal entropy $\max_{0 \leq s \leq k} \log d_s$. This has been done in numerous cases (e.g. Hénon maps \cite{BedSmi} or holomorphic endomorphism \cite{ForSib}) and we gave in \cite{ThelinVigny1} a very general criterion under which we can construct a measure $\mu$ of measure-theoretic entropy $h_\mu(f)= \max_{0 \leq s \leq k} \log d_s$. \\

On the other hand, $f$ naturally acts on analytic sets of dimension $l\leq k$ (at least outside the indeterminacy sets), the case $l=0$ being the classical action on points $z\in X$. The purpose of this article is to define natural notions of topological entropy $h^l_{\mathrm{top}}(f)$ and measure-theoretic entropy $h_{\nu}^l(f)$ (for an ergodic invariant probability measure $\nu$) that extend the classical ones and then to compute those entropies. Though such computations will again be in terms of dynamical degrees (see Theorems~\ref{theorem_majoration} and \ref{theorem_minoration}, we shall show in the particular case of endomorphisms of $\P^2$ of degree $d$ that $h^1_{\mathrm{top}}(f)= \log d (=\log d_1)$ for a large class of maps but we give examples where $h^1_{\mathrm{top}}(f) <\log d$ (see Theorem~\ref{generic}). This makes the entropies of meromorphic maps acting on analytic sets richer than the classical notion. \\

Observe for that, in the general setting of compact Kähler manifold, that they are a priori no global analytic sets of positive dimension. This is why, as we will see right below, the point of view we adopt here to define the entropies is the growth rate in a very strong sense of \emph{local analytic sets}. Denote by $I$ the indeterminacy set of $f$. For $\delta >0$ and $n \in \N$, we define:
\begin{align*}
X_l^{\delta,n}:= \Big\{    W \subset X |  &\exists x \in W, \ W \cap  B(x,e^{-n\delta})\ \mathrm{is\ analytic \  of \ exact \ dimension} \ l \ \mathrm{ in } \  B(x,e^{-n\delta}),\\
&\, \mathrm{Vol}_l(W\cap  B(x,e^{-n\delta}))\leq 1, \, \forall k\leq n-1, \  f^k( W) \subset X\backslash I    \Big\} .
\end{align*}
In the above, $\mathrm{Vol}_l$ denotes the $l$-dimensional volume. 
 
For $A,B \subset X$, we denote $\dist(A,B):= \inf \{d(x,y), \ x \in A, \ y \in B\}$. Beware that it is not an actual distance, for example, when $X=\P^k$ and $l=k-1$, two analytic hypersurfaces $A$ and $B$ in $\P^k$ necessarily intersect by Bézout's theorem.
\begin{definition*}
For $n \in \N $, we say that a set $E \subset X_l^{\delta,n}$ is  \emph{$(n,\delta)$-separated} if for all $W\neq W' \in E$ there exists $i \in \{0, \dots, n-1\}$, such that $\dist(f^i(W),f^i(W'))\geq \delta$. 
\end{definition*}

\begin{definition*}
For $l \leq k$, we define $h_{\mathrm{top}}^l(f)$, the \emph{ $l$-topological entropy of $f$}, as the quantity:
\[ h_{\mathrm{top}}^l(f) := \varlimsup_{\delta \to 0} \varlimsup_{n\to \infty}  \frac{1}{n} \log \left(\max  \{\# E, \ E\subset X_l^{\delta,n} \ \mathrm{is} \ (n,\delta)-\mathrm{separated}  \} \right).
 \]
\end{definition*}
\begin{remark}\normalfont
	\begin{enumerate}
		\item When $l=0$,  $h_{\mathrm{top}}^0(f)=h_{\mathrm{top}}(f)$ is the classical topological entropy of $f$ since points whose forward orbit stays in $X\backslash I$ belong to $X_0^{\delta,n}$ for all $n$. By compacity, $h_{\mathrm{top}}^k(f)=0$ and, using a slicing argument, one has that $l \mapsto h_{\mathrm{top}}^l(f)$ is decreasing.
		\item We could also have defined a notion of entropy using $n,\delta$-separated sets in $X_l^{\delta',n}$ for $\delta\neq \delta'$ and then make $\delta\to 0$ and $\delta'\to 0$ in an appropriate order; in here, we choose to take $\delta=\delta'$ in order to simplify the definitions.   
		 \item Finally, observe that our definitions (see also the notion of measure-theoretic entropy below) make sense for a $C^r$-map $f$ on a real $C^r$-manifold $M$ acting on local $C^r$ manifolds.
	\end{enumerate}
\end{remark} 
Our first result is:
\begin{theorem}\label{theorem_majoration}
	Let $f$ be a dominating meromorphic map of a compact Kähler manifold $X$, then for any $0\leq l \leq k$, we have \[h_{\top}^l(f)\leq \log \max_{j\leq k-l }d_j.\]
\end{theorem}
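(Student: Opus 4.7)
My approach mirrors the classical Gromov proof (the case $l=0$), lifted to the setting of $l$-dimensional pieces. Fix an $(n,\delta)$-separated family $E \subset X_l^{\delta, n}$. For each $W \in E$, the orbit map $F_n \colon x \mapsto (x, f(x), \ldots, f^{n-1}(x))$ is holomorphic on $W$ by the assumption $f^k(W) \subset X \setminus I$ for $k \leq n-1$; hence $\tilde W := F_n(W) \subset X^n$ is an $l$-dimensional analytic subset of the $k$-dimensional graph $\Gamma_n := F_n(X \setminus \bigcup_{k < n} f^{-k}(I))$. The $(n,\delta)$-separation of $E$ upgrades, via the product metric on $X^n$, to $\dist_{X^n}(\tilde W, \tilde W') \geq \delta$ for $W \neq W'$ (since at the separating coordinate the $X$-distance is at least $\delta$), so the balls $B_{X^n}(\tilde x_W, \delta/2)$ around any distinguished points $\tilde x_W \in \tilde W$ are pairwise disjoint.

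Applying Lelong's inequality to the $l$-dimensional piece $\tilde W$ in each such ball gives $\mathrm{Vol}_l(\tilde W \cap B_{X^n}(\tilde x_W, \delta/2)) \geq c_l (\delta/2)^{2l}$. Combined with Wirtinger's identity and the pullback relation $F_n^* \omega_{X^n} = \omega_n := \omega + f^*\omega + \cdots + (f^{n-1})^*\omega$, summation over $W$ yields
\[
\#E \cdot c_l (\delta/2)^{2l} \;\leq\; \frac{1}{l!} \sum_{W \in E} \int_W \omega_n^l \;=\; \frac{1}{l!} \int_X \omega_n^l \wedge T_E,
\]
where $T_E := \sum_{W \in E} [W]$ is the positive $(k-l, k-l)$-current carried by the pieces.

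The decisive step is then to establish the global bound
\[
\int_X \omega_n^l \wedge T_E \;\leq\; C(\epsilon)\, n^K \, \bigl(\max_{j \leq k-l} d_j + \epsilon\bigr)^n
\]
uniformly in $E$. Expanding $\omega_n^l$ by the multinomial theorem produces $O(n^l)$ mixed intersection integrals $\int_X T_E \wedge \bigwedge_j ((f^{i_j})^*\omega)^{a_j}$ with $\sum a_j = l$. Iteratively applying the adjunction $\int T \wedge f^*\alpha = \int f_* T \wedge \alpha$ and invoking the spectral radius bounds for $f^*$ on the cohomology groups $H^{j,j}(X)$, together with the log-concavity of the dynamical degree sequence, yields the advertised rate $\max_{j \leq k-l} d_j^n$. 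The chief obstacle is that $T_E$ is \emph{not} globally $d$-closed: each $[W]$ is closed as a current only within the small ball $B(x_W, e^{-n\delta})$. Overcoming this requires either a local-to-global analysis via a fine cover of $X$ in which each $W$ lies in a single chart, an approximation of $T_E$ by a nearby closed positive current (with error controlled by the smallness $e^{-n\delta}$ of the pieces), or alternatively a Crofton-type slicing of $\Gamma_n$ in $X^n$ by generic $(kn - l)$-dimensional test families that reduces the count to a $(k-l)$-dimensional cohomological bound directly governed by $\max_{j \leq k-l} d_j$. Granting this estimate, $\#E \leq C(\epsilon)\, n^K\, \delta^{-2l}\, (\max_{j \leq k-l} d_j + \epsilon)^n$, and the inequality $h^l_{\top}(f) \leq \log \max_{j \leq k-l} d_j$ follows by taking logarithms, dividing by $n$, and sending $n \to \infty$, $\epsilon \to 0$, $\delta \to 0$ in that order.
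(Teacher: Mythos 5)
Your main chain of inequalities does not close, and the gap is structural rather than technical. The bound you want in step 3,
\[
\int_X \omega_n^l \wedge T_E \;\leq\; C(\epsilon)\, n^K\, \bigl(\max_{j \leq k-l} d_j + \epsilon\bigr)^n
\quad \text{uniformly in } E,
\]
cannot hold: the left-hand side is additive over the pieces $W$, so it scales like $\#E$ (already $\int \omega^l \wedge T_E = \sum_W \mathrm{Vol}_l(W)$ grows with $\#E$), and no cohomological argument can produce a bound on a current whose total mass is unbounded. Moreover, even if one replaced $T_E$ by a fixed closed $(k-l,k-l)$-class, pairing an $(l,l)$-form built from $(f^{i_j})^*\omega$ against it is governed by $\max_{j\leq l} d_j$, \emph{not} $\max_{j\leq k-l} d_j$. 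These differ whenever $l > k/2$ and the degree sequence peaks between $k-l$ and $l$: for instance $k=3$, $l=2$, degrees $1,2,4,1$ (a regular automorphism of $\C^3$), where your route would give $d_2^n = 4^n$ but the theorem asserts $\max(d_0,d_1)^n = 2^n$. Finally, the Lelong step is also not applicable as written: $W$ is only analytic in $B(x, e^{-n\delta})$, so $\tilde W$ has boundary well inside $B_{X^n}(\tilde x_W, \delta/2)$, and the lower bound $c_l(\delta/2)^{2l}$ requires the set to be relatively closed in that ball. (Using radius $e^{-n\delta}$ instead would cost only a subexponential factor, so this particular issue is repairable; the first two are not.)

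The paper's proof works in the complementary codimension. After reducing to a chart, one uses Alexander--Taylor to find an $l$-dimensional coordinate plane $P$ on which $cN$ of the pieces have projection of volume $\geq C_l e^{-2ln\delta}$, then slices $X$ by the $(k-l)$-dimensional fibers $F(a)=\pi_3^{-1}(a)$, $a\in P$. For each $a$, one forms the orbit multigraph $\Gamma_n(a)$ over $F(a)$; its $\omega_n$-volume is controlled by $\int_{F(a)}\omega_n^{k-l}$, and integrating over $a$ yields the cohomological bound $c_\varepsilon n^{k-l}\max_{j\leq k-l}(d_j+\varepsilon)^n$ --- this is where the correct index $k-l$ enters, because the sliced object is $(k-l)$-dimensional. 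For the lower bound, separation of the $W$'s forces the intersection points $F(a)\cap W_i$ to be $(n,\delta)$-separated as \emph{points}, so the multigraph over $F(a)$ passes through many disjoint Bowen balls, each contributing $\gtrsim c(\delta)$ of volume (Lemma 13 of de Th\'elin's Inventiones paper); integrating over $a$ gives $\int \mathrm{vol}(\Gamma_n(a))\,da \geq c(\delta)\,C_l\,cN\,e^{-2ln\delta}$. Comparing the two gives the bound on $N$. Your last sentence, mentioning ``a Crofton-type slicing... reduces the count to a $(k-l)$-dimensional cohomological bound,'' gestures toward exactly this mechanism, but it is offered only as a possible patch for non-closedness of $T_E$, not as the argument; as such your proposal does not establish the theorem.
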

 We now define a measure-theoretic entropy for analytic sets associated to an (ergodic) invariant measure $\nu$. Let $\Lambda$ be a set of positive measure for $\nu$. We say that a set $E \subset X_l^{\delta,n}$ is  \emph{$(n,\delta, \Lambda)$-separated} if it is \emph{$(n,\delta)$-separated} and if furthermore for all $W \in E$, $W\cap \Lambda \neq \varnothing$.
\begin{definition*}
	For $l \leq k$, $\nu$ an invariant probability measure and $\kappa>0$, we consider the quantity:
	 	\[ h_{\nu}^l(f,\kappa) :=\inf_{\Lambda, \ \nu(\Lambda) >1-\kappa} \varlimsup_{\delta \to 0} \varlimsup_{n\to \infty}  \frac{1}{n} \log \left(\max  \{\# E, \ E\subset X_l^{\delta,n} \ \mathrm{is} \ (n,\delta,\Lambda)-\mathrm{separated}  \} \right).
	 \]
	We define $h_{\nu}^l(f)$, the \emph{$l$-measure-theoretic entropy of $f$}, as the quantity:
	\[ h_{\nu}^l(f) :=\sup_{\kappa>0}  h_{\nu}^l(f,\kappa).
	\]
\end{definition*}
We show in Proposition~\ref{same_for_l=0} that  $ h_{\nu}^0(f)$  is the usual measure-theoretic entropy when $\nu$ is ergodic. As above, one has $l\mapsto  h_{\nu}^l(f)$ is decreasing and $ h_{\nu}^k(f)=0$. Our main result in that setting is the following.
\begin{theorem}\label{theorem_minoration}
	Let $f$ be a dominating meromorphic map of a compact Kähler manifold $X$. Let $\mu$ be a an ergodic invariant measure such that $h_\mu(f)>0$ and $\log \dist(., I) \in L^1(\mu)$. Assume that the (well-defined) Lyapunov exponents satisfy: 
	\[\chi_1 \geq \dots \geq \chi_s>0>\chi_{s+1} \geq \dots \geq  \chi_k. \] 
	Then, we have:
	\begin{equation}\label{eq_minoration}
	\forall l \leq k-s, \  h_{\mu}^l(f) = h_\mu(f).
	\end{equation}
\end{theorem}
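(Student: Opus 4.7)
The upper bound $h_\mu^l(f)\le h_\mu(f)$ follows from the monotonicity of $l\mapsto h_\nu^l(f)$ recorded just after the definition, combined with Proposition~\ref{same_for_l=0} identifying $h_\mu^0(f)$ with the classical measure-theoretic entropy. The substantive content is therefore the lower bound $h_\mu^l(f)\ge h_\mu(f)$ for $l\le k-s$. The plan is to attach, to each point of a Katok-type $(n,\delta)$-separated set inside a Pesin block, a local $l$-dimensional analytic slice of a complex Pesin stable manifold through that point, and then to exploit uniform exponential contraction along the stable directions in order to propagate the $(n,\delta)$-separation of points to the resulting family of slices.

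The first step is to invoke Pesin theory in the meromorphic setting, which is available thanks to the hypothesis $\log\dist(\cdot,I)\in L^1(\mu)$, as already used in our \cite{ThelinVigny1}. For any $\eta>0$ it yields a Pesin block $\Lambda_\eta\subset X\setminus I$ with $\mu(\Lambda_\eta)>1-\eta$ and uniform constants $r,C>0$ such that: (i) through each $x\in\Lambda_\eta$ there is a local complex-analytic stable manifold $W^s_{\mathrm{loc}}(x)$ of complex dimension $k-s$ and size at least $r$, realized as a holomorphic graph over the stable Oseledets subspace $E^s(x)$; (ii) for every $y\in W^s_{\mathrm{loc}}(x)$ and every $i\ge 0$, $\dist(f^i(y),f^i(x))\le Ce^{i(\chi_{s+1}+\eta)}$ with $\chi_{s+1}<0$; (iii) by Borel--Cantelli applied to $\log\dist(\cdot,I)\in L^1(\mu)$, for $\mu$-a.e.\ $x$ one has $\dist(f^i(x),I)\ge e^{-i\eta}$ for all $i$ large, so once $\eta\ll|\chi_{s+1}|$ the iterates $f^i(W^s_{\mathrm{loc}}(x))$ remain in $X\setminus I$ by (ii). The second step is Katok's characterization of $h_\mu(f)$: for every $\eta',\kappa>0$ and for $\delta$ small, $n$ large, there is an $(n,\delta)$-separated set $\{x_1,\dots,x_N\}\subset\Lambda_\eta$ with $N\ge e^{n(h_\mu(f)-\eta')}$. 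At each $x_j$ I would pick an $l$-dimensional complex subspace of $E^s(x_j)$, possible because $l\le k-s$, and let $W_j$ be the corresponding $l$-dimensional analytic piece of $W^s_{\mathrm{loc}}(x_j)$ cut out in $B(x_j,e^{-n\delta})$. Then $W_j\in X_l^{\delta,n}$ (the volume bound is automatic for $n$ large, analyticity is inherited from $W^s_{\mathrm{loc}}$, and the orbit avoids $I$ by (iii)), and (ii) gives $\dist(f^i(W_j),f^i(W_{j'}))\ge\delta-2Ce^{i(\chi_{s+1}+\eta)}\ge\delta/2$ whenever the $x_j$ are separated at time $i$, so $\{W_j\}$ is $(n,\delta/2,\Lambda_\eta)$-separated. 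Sending $\kappa,\eta,\eta'\to 0$ yields $h_\mu^l(f)\ge h_\mu(f)$.

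The main obstacle will be implementing the Pesin apparatus rigorously in the meromorphic context while managing all the small parameters consistently: one must produce \emph{complex-analytic} (not merely smooth) stable manifolds with uniform size and graph-distortion on the Pesin block, verify that the sub-exponential Pesin drifts are negligible at the $e^{-n\delta}$ scale appearing in the definition of $X_l^{\delta,n}$, and, crucially, ensure that \emph{every} finite iterate $f^i(W_j)$ with $i\le n-1$ lies in $X\setminus I$, not merely $\mu$-almost every orbit. The $L^1$ hypothesis on $\log\dist(\cdot,I)$ combined with the uniform exponential contraction toward the base orbit is exactly what supplies this last control, but the careful ordering of the limits $\kappa,\eta,\eta'\to 0$, and the reconciliation of Pesin's own $\eta$-drift with both the contraction rate $\chi_{s+1}+\eta$ and Katok's entropy-loss scale $\eta'$, is where most of the technical work will lie.
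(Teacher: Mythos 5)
Your proposal is correct and follows essentially the same route as the paper: attach $l$-dimensional analytic pieces of Pesin stable manifolds to a Katok/Brin--Katok $(n,\delta)$-separated set inside a Pesin block, then use contraction along stable directions to keep the pieces $\delta$-small under $f^i$ for $i<n$, so that the separation of the base points transfers to the attached sets. The paper realizes this via the approximated stable manifolds $W_n(x)$ of \cite{DeThelin_inventiones}, built by iterated pull-backs and graph transforms at scale $e^{-n\delta}$ and satisfying the uniform bound $\mathrm{diam}(f^k(W_n(x)))\le e^{-n\delta}$ for all $k<n$ (which absorbs in one stroke the indeterminacy-set issue, the Pesin sub-exponential drift, and the volume bound via the $\alpha_0$-Lipschitz graph property), and works at $l=k-s$, deducing the smaller $l$ from the monotonicity of $l\mapsto h_\nu^l(f)$ rather than slicing the stable manifold to dimension $l$ directly as you do.
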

Finally, in the particular case of endomorphisms of $\P^2$ of degree $d$, we show (generic is meant in the sense of \cite{thelin_genre}.):
\begin{theorem}\label{generic}
	Let $f$ be a generic holomorphic endomorphism of $\P^2(\C)$ of degree $d\geq 2$. Assume that $\mathrm{supp}(\mu) \neq \supp(T)$ where $T$ is the Green current of $f$ and $\mu:= T\wedge T$ is the measure of maximal entropy. Then :
	\[h_{\mathrm{top}}^1(f)= \log d. \]  
\end{theorem}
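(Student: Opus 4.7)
The upper bound $h_\top^1(f)\leq \log d$ is immediate from Theorem~\ref{theorem_majoration}: with $k=2$ and $l=1$ it reads $\log\max(d_0(f),d_1(f))=\log d$, since $d_0=1$ and $d_1=d$ for a holomorphic endomorphism of $\P^2$ of degree $d$.

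For the matching lower bound, the plan is to apply Theorem~\ref{theorem_minoration} to an ergodic invariant probability measure $\nu$ whose Lyapunov spectrum has the form $\chi_1>0>\chi_2$ and whose entropy satisfies $h_\nu(f)\geq \log d$. The integrability hypothesis $\log\dist(\cdot,I)\in L^1(\nu)$ is automatic because $I=\varnothing$ for a holomorphic map. With $s=1$ and $k-s=1$, Theorem~\ref{theorem_minoration} then yields $h_\nu^1(f)=h_\nu(f)\geq \log d$. Since every $(n,\delta,\Lambda)$-separated family is in particular $(n,\delta)$-separated, we have $h_\top^1(f)\geq h_\nu^l(f,\kappa)$ for every $\kappa>0$, and hence $h_\top^1(f)\geq h_\nu^1(f)\geq \log d$. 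Combined with the upper bound, this gives the theorem.

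Everything thus reduces to constructing $\nu$, which is where the genericity and the strict inclusion $\supp(\mu)\subsetneq \supp(T)$ play their essential role. The idea, following De Th\'elin's work on generic endomorphisms \cite{thelin_genre}, is to exploit the fact that, under those assumptions, the Green current $T$ is laminar on a nontrivial part of $\supp(T)\setminus \supp(\mu)$. A standard averaging of the renormalized pullbacks $d^{-n}(f^n)^*[L]$ of a generic line $L$, restricted to that laminar piece, should produce an invariant probability measure whose entropy equals $\log d=\log d_1$. Picking a well-chosen ergodic component $\nu$, one obtains $h_\nu(f)\geq \log d$; the Ruelle inequality then forces $\chi_1>0$, while the transverse exponent $\chi_2$ should be strictly negative precisely because $\nu$ is supported away from the ``hyperbolic core'' $\supp(\mu)$, where the two Lyapunov exponents of $\mu$ are both positive.

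The main obstacle is this construction of $\nu$ with the correct entropy and, more delicately, with the correct signs of both Lyapunov exponents. The strict negativity of $\chi_2$ is the technical heart of the argument and is exactly what the hypothesis $\supp(\mu)\neq \supp(T)$ buys: without it one is driven back onto $\mu$ itself, for which $s=2$ and Theorem~\ref{theorem_minoration} only informs $l=0$, causing the strategy to collapse.
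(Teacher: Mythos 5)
Your reduction is clean as far as it goes: the upper bound from Theorem~\ref{theorem_majoration} is correct, and the chain $h_\top^1(f)\geq h_\nu^1(f)\geq h_\nu^1(f,\kappa)$ (by dropping the constraint $W\cap\Lambda\neq\varnothing$) is valid, so \emph{if} one had an ergodic $\nu$ with $h_\nu(f)\geq\log d$ and $\chi_1>0>\chi_2$, then Theorem~\ref{theorem_minoration} with $s=1$ would indeed finish the job. But the existence of such a $\nu$ is precisely the gap, and you do not fill it; you only say the construction ``should'' produce it. The paper itself flags the obstruction: in the discussion following Theorem~\ref{theorem_minoration}, the authors recall that the saddle measures built in \cite{Dethelin_Fourier} for $k=2$, $l=1$ only come with $\chi_2\geq 0$ in general. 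If $\chi_2>0$ one has $s=2$ and Theorem~\ref{theorem_minoration} gives nothing for $l=1$; if $\chi_2=0$ the hypothesis of Theorem~\ref{theorem_minoration} is simply not satisfied. Your heuristic that $\supp(\nu)$ avoiding $\supp(\mu)$ forces $\chi_2<0$ strictly is not substantiated, and nothing in \cite{thelin_genre} gives it for free. So the argument cannot be completed along these lines with the tools cited.

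The paper's actual proof sidesteps measures entirely and builds an $(n,\delta)$-separated family of disks by hand. It fixes $x_0\in\supp(T)\setminus\supp(\mu)$, uses the genus concentration result of \cite{thelin_genre} to get $\mathrm{Genus}\,(f^{-n}(L)\setminus U)\lesssim d^n e^{n\delta/3}$ for a neighbourhood $U$ of $\supp(\mu)$, then runs the laminarity machinery of \cite{thelin_laminaire} (tiling by squares of size $e^{-\delta n}$, Riemann--Hurwitz/graph counting, isoperimetric control of bad components) to produce on the order of $d^n$ unramified islands over squares of size $e^{-\delta n}$. A length--area argument shows most of these islands have all forward images $f^i(\tilde\Delta)$, $i<n$, of diameter $\leq\delta$. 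Finally, Yomdin's bound (Proposition~\ref{Yomdin}) controls the mass of $(f^n)^*\omega\wedge\pi^*\omega_0$ on dynamical balls, which yields at least $\varepsilon_1\frac{\pi}{64 C_1}d^n e^{-\delta' n}$ disks that are $(n,\delta)$-separated, giving $h_\top^1(f)\geq\log d-\delta'$. The hypothesis $\supp(\mu)\neq\supp(T)$ enters not through any Lyapunov-exponent sign, but simply to make the genus bound $d^n$ (rather than $d^{2n}$) available on a region where $T$ has mass. The direct approach is what makes the theorem work without resolving the hyperbolicity question your proposal leaves open.
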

Observe that it is possible to have $h_{\mathrm{top}}^1(f) \neq \log d$ for specific holomorphic endomorphisms of $\P^2(\C)$. Indeed, we show in Subsection~\ref{Lattes} that for a Lattès map $f$ of $\P^2$ of degree $d$, then $h_{\mathrm{top}}^1(f)=0$ ($\mathrm{supp}(\mu) = \supp(T)$ in that case).

\section{Computing $h_{\top}^l(f)$ in terms of the dynamical degrees}

\subsection{Proof of Theorem~\ref{theorem_majoration}.}
	Take $\delta>0$ and let $E\subset X_l^{\delta,n}$ be $(n,\delta)$-separated with $\# E =N$. Consider a finite atlas of $X$. We can choose a chart $\Delta$ such that there are $c N$ elements $W$ of $E$ that intersects $\Delta$ where $c>0$ does not depend on $E$. Slightly enlarging $\Delta$ if necessary, we can assume that for each such $W\in E$, $W':=W\cap \Delta$ is in $  X_l^{\delta,n}$. Write $W'_1, \dots W'_{cN}$ the collection of those   $(n,\delta)$-separated sets. 
	
	 Let us consider the euclidean metric on $\Delta$, it is comparable with $\omega$ in $\Delta$ so we  can assume that each $W'_i$ is analytic  of exact dimension $l$ in $ B(x,c'e^{-n\delta})$ where $c'$ is a constant that depends only on $\Delta$.
	 
	 By the main result of \cite{AlexanderTaylor}, there exists a positive constant $C_l$ such that 
	 \[\sum_{\alpha} \lambda(\pi_\alpha(W'_i)) \geq C_l e^{-2ln\delta},\]
	  where the sum is over the $l$-dimensional coordinate planes $\alpha$ through $0$ in $\C^k$, $\pi_\alpha$ is the projection to $\alpha$, and $\lambda$ is the Lebesgue measure in $\mathbb{C}^l$ (notice that the area is counted \emph{without} the multiplicity). \\
	 
	 In particular, shrinking $c$ and $C_l$ if necessary, we can assume that they are $c N$ elements $W_i'\subset \Delta$ which are $(n,\delta)$-separated and a  $l$-dimensional coordinate plane $P$ 
	 such that the projection of each $W'$ on $P$ has volume $\geq  C_l e^{-2ln\delta}$. Observe that we can slightly move $P$ and the above still stands.  
	 
	 For the rest of the proof, we proceed as in \cite{DeThelin_inventiones}[p.110-111] so we only sketch the main steps. If $\pi_3:\Delta \to P$ denotes the canonical projection, we can assume that $\pi_3(\Delta)$ lies in a compact set $K$ of $P$. For $a\in K$, let $F_a:=\pi_3^{-1}(\{a\})$ and $da$ be the Lebesgue measure on $P$. Let $\mathcal{W}_s :=\cup_{i\leq cN} W'_i$ and let $n(a)$ be the number of intersection of $F(a)$ with $\mathcal{W}_s$ counted with multiplicity. That way:
	\begin{align*}
	\int n(a)da & = \mathrm{volume \ of\  the\  projection\ of\ \mathcal{W}_s\ on\ P }\\
                & \geq C_l c N e^{-2ln\delta}.
	 	\end{align*} 
Let $\Gamma_n(a)$ be the closure of $\{(z, f(z), \dots, f_{n-1}(z)), \  z \in F(a)\cap \Delta \}$ in $X^n$. It is the multigraph in $F(a)\cap\Delta$ that we endow with the Kähler form $\omega_n:=\sum_{i\leq n} \Pi_i^*(\omega)$ where $\Pi_i$ denotes the canonical projection of  $X^n$ on its $i$-th factor. We have \cite{DeThelin_inventiones}[Lemma 13]:
\[\int \mathrm{volume}(\Gamma_n(a)) da \geq c(\delta) \int n(a) da  \]
where $c(\delta)$ is a constant that depends only on $\delta$ (and not $n$). 

On the other, one shows that, for any $\varepsilon>0$, there exists $c_\varepsilon$ such that :
\[ \int \mathrm{volume}(\Gamma_n(a))da \leq c_\varepsilon n^{k-l} \max_{
0\leq j \leq k-l} (d_j + \varepsilon)^n.\]
In particular,
\[ c(\delta)C_l cN e^{-2ln\delta} \leq c_\varepsilon n^{k-l} \max_{
	0\leq j \leq k-l} (d_j + \varepsilon)^n.\]
We take the logarithm, divide by $n$ and  let $n\to \infty$. The result then follows by letting $\varepsilon \to 0$ and $\delta\to 0$. 
\begin{remark} \rm
	Observe that we do not require in the proof of Theorem~\ref{theorem_majoration} that the $l$-dimensional volume of the $W\in X^{\delta,n}_l$ is $\leq 1$. 
\end{remark}
	
	\subsection{Proof of Theorem~\ref{theorem_minoration}}
	Before proving Theorem~\ref{theorem_minoration}, we show that $h_{\nu}^0(f)$ is the usual measure-theoretic entropy when $\nu$ is ergodic.  Indeed, following \cite{Katok1980}, we have the following folklore's proposition whose proof will be useful for us (it can be easily extended to the case of meromorphic maps assuming $\nu(I)=0$). 
	\begin{proposition}\label{same_for_l=0}
		Assume that $\nu$ is ergodic, $f$ is holomorphic and $l=0$, then $h_{\nu}^0(f) = h_{\nu}(f)$ is the classical measure-theoretic entropy of $\nu$.
	\end{proposition}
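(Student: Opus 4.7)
My plan is to reduce the statement to Katok's classical characterization of measure-theoretic entropy via $(n,\delta)$-separated sets on large-measure subsets. Since $f$ is holomorphic, $I=\varnothing$, so every point of $X$ belongs to $X_0^{\delta,n}$; moreover, a $(n,\delta,\Lambda)$-separated family in $X_0^{\delta,n}$ reduces to a classical $(n,\delta)$-separated subset of $\Lambda$ (taking each $W$ to be a single point). Writing $s_n(\delta,\Lambda)$ for the maximal cardinality of such a set, the claim becomes
$$\sup_{\kappa>0}\, \inf_{\nu(\Lambda)>1-\kappa}\, \varlimsup_{\delta \to 0} \varlimsup_{n\to\infty} \frac{1}{n}\log s_n(\delta,\Lambda) = h_\nu(f).$$

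For the upper bound $h_\nu^0(f) \leq h_\nu(f)$, I would invoke the Brin--Katok local entropy formula: for $\nu$-a.e.\ $x$, $\lim_{\delta\to 0}\limsup_n -\tfrac{1}{n}\log \nu(B_n(x,\delta)) = h_\nu(f)$ (using ergodicity, so the Brin--Katok local entropy is $\nu$-a.s.\ constant). Fixing $\gamma,\kappa > 0$, Egorov's theorem produces a set $\Lambda$ with $\nu(\Lambda) > 1-\kappa$ and $\delta_0 > 0$ on which this convergence is uniform: for $\delta \leq \delta_0$ and $n$ large, $\nu(B_n(x,\delta/3)) \geq e^{-(h_\nu(f)+\gamma)n}$ for every $x \in \Lambda$. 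Since an $(n,\delta)$-separated $E \subset \Lambda$ gives pairwise disjoint Bowen balls $B_n(x,\delta/3)$, one gets $|E| \leq e^{(h_\nu(f)+\gamma)n}$. Taking limsups and letting $\gamma\to 0$ yields the upper bound.

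The lower bound $h_\nu^0(f) \geq h_\nu(f)$ is more delicate since the infimum over $\Lambda$ sits \emph{outside} the limsups: I must handle an arbitrary $\Lambda$ with $\nu(\Lambda) > 1-\kappa$. Applying Brin--Katok in the converse direction together with Egorov, I would produce a set $\Lambda_0$ (depending only on $\nu,\gamma,\kappa$) with $\nu(\Lambda_0) > 1-\kappa/2$ on which $\nu(B_n(x,\delta)) \leq e^{-(h_\nu(f)-\gamma)n}$ uniformly for $x\in\Lambda_0$, $\delta \leq \delta_1$, and $n$ large. For any admissible $\Lambda$, the intersection $\Lambda \cap \Lambda_0$ has measure $> 1-3\kappa/2$; a maximal $(n,\delta)$-separated set $E$ in $\Lambda \cap \Lambda_0$ has its Bowen balls $B_n(x,\delta)$ covering $\Lambda \cap \Lambda_0$ by maximality, so summing the uniform measure bound gives $|E| \geq (1-3\kappa/2)\,e^{(h_\nu(f)-\gamma)n}$. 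Taking limsups, then $\gamma\to 0$ and $\kappa\to 0$, concludes.

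The main obstacle is the bookkeeping of the two limit orders: the outer infimum over $\Lambda$ blocks a direct appeal to standard Katok formulas, and one must verify that the ``good'' set $\Lambda_0$ may be chosen \emph{before} and \emph{uniformly in} $\Lambda$. No ingredients beyond Egorov, Brin--Katok, and the standard covering--separation duality are needed; the extension to meromorphic $f$ with $\nu(I)=0$ alluded to in the statement follows by further intersecting $\Lambda_0$ with the full-measure set of orbits avoiding $I$.
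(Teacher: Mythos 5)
Your proposal is correct and follows essentially the same route as the paper's own proof: both directions rest on the Brin--Katok local entropy formula combined with the standard duality between Bowen-ball packings and coverings, and both handle the asymmetric quantifier structure (exhibiting one good $\Lambda$ for the upper bound, and intersecting an arbitrary admissible $\Lambda$ with a large Brin--Katok--regular set for the lower bound). The only cosmetic difference is that you invoke Egorov for uniformity where the paper makes the same point explicit by introducing the sets $X_{\delta_0,n}$ and $Y_{k,\delta_0}$.
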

	\begin{proof} 
		Observe first that $X_0^{\delta,n} =X$. Let $\varepsilon>0$ and $\Lambda \subset X$. Fix $1\gg\delta_0 >0$ and consider:
		\[X_{{\delta_0}, n}:= \left\{ x, \ \nu(B_n(x,{\delta_0})) \leq e^{-h_{\nu}(f)n +\varepsilon n}    \right\}. \]
		We know by Brin-Katok formula(\cite{BrinKatok} that
		\begin{equation*}
		1-\frac{\nu(\Lambda)}{4} \leq \nu \left(\left\{x, \ \underline{\lim} -\frac{1}{n} \log \nu(B_n(x,{\delta_0}))\geq h_\nu(f) -\frac{\varepsilon}{2} \right\}\right) \leq \nu \left(  \bigcup_{n_0} \bigcap_{n \geq n_0} X_{{\delta_0},n}   \right).
		\end{equation*} 
		In particular, we choose $n_0$ large enough so that:
		\[\nu\left(\bigcap_{n\geq n_0} X_{{\delta_0},n_0}\right) \geq 1-\frac{\nu(\Lambda)}{2}.\]
		Consider the set $\Lambda':= \Lambda \cap \bigcap_{n\geq n_0} X_{{\delta_0},n_0}$ which satisfies $\nu(\Lambda')>\nu(\Lambda)/2$ by construction. Then, for $n\geq n_0$, we start with a point $x_0 \in \Lambda'$ and we choose inductively a point $x_i \in \Lambda' \backslash \cup_{0\leq j <i} B_n(x_j,{\delta_0})$. This is possible as long as  $\nu(\Lambda') > \sum_{0\leq j <i} \nu(B_n(x_j,{\delta_0}))$. So using our hypothesis, we can find at least $N$ such points with $N$ given by:
		\[ N =  e^{ h_\nu(f)n- \varepsilon n} \nu(\Lambda').\] 
		In particular:
		\[\varlimsup_{\delta \to 0} \varlimsup_{n\to \infty}  \frac{1}{n} \log \left(\max  \{\# E, \ E\subset X \ \mathrm{is} \ (n,\delta,\Lambda)-\mathrm{separated}  \}\right) \geq h_\nu(f)-\varepsilon \]
		which gives the inequality $h_{\nu}^0(f) \geq h_{\nu}(f)$ by letting $\varepsilon \to 0$, taking the infimum over all $\Lambda$ with $\nu(\Lambda)>1-\kappa$ and letting $\kappa\to 0$. 
		
		For the other inequality, consider:
		\[Y_{k,{\delta_0}}:= \left\{ x, \ \forall n\geq k, \  \nu(B_n(x,{\delta_0/2})) \geq e^{-h_{\nu}(f)n - \varepsilon n 
		}    \right\}. \]
		Let $\kappa>0$, Brin-Katok formula implies that for $\delta_0$ small enough and $m$ large enough $\nu(\cap_{k\geq m} Y_{k, \delta_0})\geq 1 - \kappa$. In particular, we choose 
		\[\Lambda:= \cap_{k\geq m} Y_{k, \delta_0}.\] 
		Take $n\geq m$.  Consider $\{x_1,\dots, x_N\}$ a $(n,\delta_0, \Lambda)$-separated set. Then, the Bowen ball $B_n(x_i, \delta_0/2)$ are disjoint so:
		\[ N e^{-h_{\nu}(f)n - \varepsilon n 
		} \leq 1 .\]
		Taking the logarithm, dividing by $n$, letting $n\to \infty$ and $\delta_0 \to 0$ implies:
		\[ h_{\nu}^0(f) \leq  h_\nu(f) + \varepsilon
		\]
		and the result follows by letting $\varepsilon \to 0$.
	\end{proof}

	We now prove Theorem~\ref{theorem_minoration}.	Let $\mu$ be an ergodic invariant measure with $h_\mu(f)>0$. Assume that $\log \dist(., I) \in L^1(\mu)$. We recall some facts we need on Pesin theory in this setting \cite{DeThelin_inventiones}.  We shall use the results of that paper keeping the same notations: $\pi$, $\hat{X}$, $\hat{X}^*$, $\hat{\mu}$, $\tau_x$, $\varepsilon_0$, $f_x$, $f^n_x$, $f^{-n}_{\hat{x}}$, $D\hat{f}(\hat{x})$. We fix some some $\delta>0$:
	\begin{itemize}
		\item 	The Lyapunov exponent are well defined (Oseledec's Theorem). We assume that they satisfies 
		\[\chi_1 \geq \dots \geq \chi_s>0>\chi_{s+1} \geq \dots \geq \chi_k. \] 
		\item the set $\hat{Y}$ of points in the universal extension $\hat{X}$ of $X$ that satisfy the conclusion of Oseledec's Theorem, Pesin's Theorem and \cite{DeThelin_inventiones}[lemme 10] satisfies $\hat{\mu}(\hat{Y})=1$
		\item one can find a set $A$ of $\mu$-measure arbitrarily close to $1$ with $A\subset \pi(\hat{Y})$, an integer $n_0$ and a constant $\alpha_0>0$, such that (see \cite{DeThelin_inventiones}[p.103]):
		\begin{align*}
	\forall x \in A&, \  \forall n \geq n_0, \  \mu(B_n(x,2\delta)) \leq e^{-n h_\mu(f) +n \delta} .
	\end{align*} 
	\item for all $x$ in the set $A$, using iterated pull-backs and graph transforms of a suitable local complex $k-s$-plane, one can define an approximated stable manifold $W_n(x)$   such that:
	\begin{itemize}
		\item  $x\in W_n(x)$ and $W_n(x)\subset B(x, e^{-n\delta})$ is an analytic set of dimension $k-s$; 
		\item $W_n(x)$ is a graph over some $k-s$ plane of a $\alpha_0$-Lipschitz map;
		\item  $\mathrm{diam}(f^k(W_n(x))) \leq \exp(- n\delta)$ for all $k<n$. 
	\end{itemize}
	\end{itemize}
	In particular, for any $\Lambda$ with $\mu(\Lambda)>1-\kappa$, we can assume that $\Lambda \subset A$ (up to considering $\Lambda \cap A$ where $A$ is the above set of $\mu$-measure arbitrarily close to $1$).  Using the same volume argument as in the proof of Proposition~\ref{same_for_l=0}, we can thus find a $(n,\delta, \Lambda)$-separated set of cardinality $N\geq e^{n h_\mu(f) - n \delta} \mu(\Lambda)$ of points $x_1,\dots ,x_N$. 
	
	For $n$ large enough, we have that $\exp(-n\delta) \leq \delta /4$, in particular,  $(W_n(x_i))_{i\leq N}$ is a collection of sets of $X^{\delta/2, n}_{k-s}$ which are $(n, \delta/2, \Lambda)$-separated. Finally, as $W_n(x)$ is a graph over some $k-s$ plane of a $\alpha_0$-Lipschitz map, its volume is bounded by some constant that depends on $\alpha_0$ so it is $\leq 1$ for $n$ large enough.	
	Theorem~\ref{theorem_minoration} follows. 

\subsection{Computing $h_{\nu}^l(f)$ in some families of maps}
	\subsubsection{Holomorphic maps of $\P^k$}
Let $f:\P^k\to \P^k$ be a holomorphic map and take $0\leq l \leq k$. Assume that one can find an invariant ergodic probability measure $\mu$ of entropy $\log \max_{j\leq k-l }d_j= (k-l) \log d$ of saddle type for $f$:
\[\chi_1 \geq \dots \geq \chi_{k-l}>0>\chi_{k-l+1} \geq \dots \geq \chi_k, \] 
where $(\chi_i)_{i\leq k}$ are the well defined Lyapunov exponent of $\mu$. Then, as a consequence of Theorems \ref{theorem_majoration} and \ref{theorem_minoration}, one obtains directly:
\[ h_{\mathrm{top}}^l(f) = h_{\mu}^l(f)= (k-l) \log d .\]
In \cite{Dethelin_Fourier}, the first author constructed such measures for the case $k=2, l=1$ (with $\chi_2 \geq 0$ in general), see also \cite{Fornaess_Sibony_1998} for the initial case of hyperbolic maps. 

\subsubsection{Generic birational maps of $\P^k$} 
Let $f: \P^k \to \P^k$ be a birational maps such that $\mathrm{dim}(I(f))= k-s-1$ and $\mathrm{dim}(I(f^{-1}))= s-1$ for some $1\leq s\leq k-1$. Generalizing a construction of Bedford and Diller (\cite{Bedford_Diller}), we defined in \cite{ThelinVigny1} a condition on such maps under which we constructed a measure of maximal entropy $s\log d$ that integrates $\log \dist(., I)$ with 	$\chi_1 \geq \dots \geq \chi_{s}>0> \chi_{s+1} \geq  \dots \geq\chi_k$.  The condition is generic in the sense that for all $A$ outside a pluripolar set of $ \mathrm{Aut}(\P^k)$ and any $f$ such that  $\mathrm{dim}(I(f))= k-s-1$ and $\mathrm{dim}(I(f^{-1}))= s-1$, then $f \circ A$ satisfies the condition. Furthermore, the class of such generic birational maps contains the regular automorphisms of $\C^k$ and generic birational maps of $\P^k$ (\cite{Sibony, DinhSibonyregular}). By the above, for such a generic birational map of $\P^k$, one has:
\begin{equation*}
\forall l\leq k-s, \ h_{\mathrm{top}}^l(f) = s \log d.  
\end{equation*}

As a consequence, observe that one does not necessarily have $h_{\mathrm{top}}^l(f)=h_{\mathrm{top}}^l(f^{-1})$ for a birational map. Indeed, consider the case of a generic birational map of $\P^3$ whose dynamical degrees are $1=d_0<2=d_1<4=d_2>1=d_3$ (for example, one can take the regular automorphism $(z+y^2,y+x^2,x)$). Then, $h_{\mathrm{top}}^1(f)=\log 2$  and $h_{\mathrm{top}}^1(f^{-1})=\log 4$. Observe also that $h_{\mathrm{top}}^{k-l}(f)\neq h_{\mathrm{top}}^l(f^{-1})$ in general (for $k=l$, then $h_{\mathrm{top}}^{0}(f)=h_{\mathrm{top}}(f)\neq0 $ and  $h_{\mathrm{top}}^k(f^{-1})=0$).\\

Lastly, in \cite{moi_Fourier}, the second author generalized the construction of generic birational map to the rational case with no additional hypothesis on the dimension of the indeterminacy sets. In particular, he constructed saddle measures of maximal entropy under mild hypothesis, giving many examples where one can compute $h_{\mathrm{top}}^l(f)$ (for $l$ such that $d_l \leq \max d_j$).

\section{Proof of Theorem~\ref{generic}}

\subsection{Strategy of the proof and Yomdin's estimate}
Take $f$ as in Theorem~\ref{generic}. Consider a projective line $L$. By \cite{thelin_genre}, the genus of $f^{-n}(L)$ outside $\supp(\mu)$ is bounded by $d^ne^{\delta n}$. 

Using \cite{thelin_laminaire}, we will then construct approximately $d^ne^{\delta n}$ disks of size $e^{-\delta n}$ in $f^{-n}(L) \backslash \supp(\mu)$. Using a length-area argument, we will show that the size of those disks is still small when pushed forward by $f^i$, $i=0,\dots,n-1$. 

Finally, using Yomdin's theorem (\cite{yomdin}), we will construct a $(n,\delta)$-separated set from those disks. Here is the version of Yomdin's result we will use; it can be deduced from \cite{ThelinVigny1}[Proposition 2.3.2].
\begin{proposition}[Yomdin]\label{Yomdin}
Let $\delta'>0$. Then there exist $C_1>0$ and $\delta_0>0$ such that for all $0<\delta<\delta_0$, we have for any dynamical ball $B_n(x,5\delta)$ and any projective line $L$:
\[\forall n\in \N,   \left((f^n)^*\omega\wedge L\right)(B_n(x,5\delta)) \leq C_1e^{\delta' n} .    \] 	
\end{proposition}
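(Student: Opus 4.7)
The plan is to read the left-hand side as an area with multiplicity and then invoke the dynamical Yomdin-type estimate of \cite{ThelinVigny1}[Proposition 2.3.2], which already does the heavy lifting.

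First, I would unpack the quantity $\bigl((f^n)^*\omega \wedge L\bigr)(B_n(x,5\delta))$ geometrically. The line $L$ is a smooth curve in $\P^2$, so slicing the positive closed $(1,1)$-current $(f^n)^*\omega$ by $[L]$ yields a positive measure on $L$ given locally by the pullback $(f^n|_L)^*\omega$. Hence
\[ \bigl((f^n)^*\omega \wedge L\bigr)(B_n(x,5\delta)) \;=\; \int_{L\cap B_n(x,5\delta)} (f^n)^*\omega,\]
which is the $\omega$-area, counted with the multiplicity coming from the parametrization $f^n|_L$, of the image curve $f^n(L\cap B_n(x,5\delta))\subset B(f^n(x),5\delta)$. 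The point is that while the total mass of this measure on $L$ equals the degree $d^n$, we are only asking for the contribution localized in the (very thin) Bowen ball.

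Next, I would apply Proposition~2.3.2 of \cite{ThelinVigny1} directly. That proposition provides exactly a Yomdin-type bound in this setting: given $\delta'>0$, there exist $\delta_0>0$ and $C_1>0$ such that, for every holomorphic curve of uniformly bounded area, every $x\in\P^2$, every $n\in\N$, and every $0<\delta<\delta_0$, the area of the piece of curve in $B_n(x,5\delta)$ pushed forward by $f^n$ is at most $C_1e^{\delta' n}$. Applied to a projective line $L$ (for which $\int_L\omega = 1$, uniformly in $L$), this is precisely the stated inequality.

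The main obstacle, were one to prove this from scratch rather than quote it, is the naive exponential blow-up of derivatives: the brute bound $|Df^n|_L(z)|^2 \leq \|Df\|_\infty^{2n}$ would yield a factor $e^{Cn}$ with $C = 2\log\|Df\|_\infty \gg \delta'$, which is hopeless. Yomdin's reparametrization/covering argument circumvents this by subdividing the Bowen ball into smaller pieces on which $f$ is well approximated by its Taylor polynomial of some order $r$; taking $r\to\infty$ (legitimate because $f$ is holomorphic, hence $C^\infty$) trades the brutal derivative bound for a factor $e^{\delta' n}$ with $\delta'$ arbitrarily small. This technical heart is exactly what is already packaged in \cite{ThelinVigny1}[Proposition 2.3.2], so the deduction here reduces to the bookkeeping described in the two paragraphs above.
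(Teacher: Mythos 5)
Your argument matches the paper's: the paper likewise treats this as a direct consequence of \cite{ThelinVigny1}[Proposition 2.3.2] and gives no further proof, and your reading of $\bigl((f^n)^*\omega\wedge [L]\bigr)(B_n(x,5\delta))$ as the multiplicity-counted area of $f^n(L)$ restricted to the Bowen ball is exactly the intended interpretation. The extra discussion of why a naive derivative bound fails and how Yomdin's reparametrization circumvents it is accurate context but is not needed beyond the citation.
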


\subsection{Finding a set with uniform estimates}
Let $\delta'>0$. Fix $0<\delta<\delta_0$ where $\delta_0$ is given by the above proposition. We fix $x_0\in \supp(T)\backslash \supp(\mu)$. There exist an open neighborhood $U$ of $\supp(\mu)$ and $0<r_0<1$ such that 
\[ B(x_0, r_0)\cap \overline{U}=\varnothing.  \]
Using \cite{thelin_genre}, we have:
\[\varlimsup_n  \frac{1}{n}\log \max_y \# \{z,  f^{n}(z)=y, \ z\notin U \}  \leq \log d  \]
and 
\[\varlimsup_n \frac{1}{n} \log \max_{L} \mathrm{Genus} f^{-n}(L) \backslash U  \leq \log d  \]
where the supremum is taken over the Grassmanian space $(\P^2)^*$ of complex projective lines in $\P^2$. Hence, there exist $n_2$ such that for $n \geq n_2$, 
\begin{align*}
\max_y \# \{z,  f^{n}(z)=y, \ z\notin U \}     &\leq  d^n e^{n \delta/3} \\
\max_{L} \mathrm{Genus} f^{-n}(L) \backslash U &\leq  d^n e^{n \delta/3}.
\end{align*}
In particular, there exists $C_2>0$ such that for all $n$, 
\begin{align*}
\max_y \# \{z,  f^{n}(z)=y, \ z\notin U \}     &\leq  C_2 d^n e^{n \delta/3} \\
\max_{L} \mathrm{Genus} f^{-n}(L) \backslash U &\leq   C_2 d^n e^{n \delta/3}.
\end{align*}
As $x_0 \in \supp(T)$, we have $T\wedge \omega (B(x_0,r_0/4))>0$. In particular, we can find a coordinate axis $D$ such that $T\wedge \pi^*(\omega_0)(B(x_0,r_0/4))>0$ (where $\pi$ denotes the orthogonal projection on $D$ and $\omega_0:=\omega_{|D}$). Now, as $T=\lim_n d^{-n} (f^n)^*(\omega)$, there exists $\varepsilon_1>0$ and $n_3 \in \N$ such that 
\begin{equation}\label{page5}
\forall n\geq n_3,  \ d^{-n} (f^n)^*(\omega)\wedge \pi^*(\omega_0)(B(x_0,r_0/2))\geq \varepsilon_1. 
\end{equation}
In what follows, we take $n\geq n_3$.

\subsection{Constructing disks of size $e^{-\delta n}$ in $f^{-n}(L)$}
We follow \cite{thelin_laminaire}. We subdivide the square $C_0 \subset D$, centered at $x_0$ and size $2$, into $4k^2$ identical squares with $k=e^{\delta n}/4$. Such subdivision contains four families of $k^2$ disjoint squares that we will denote by $Q_1$, $Q_2$, $Q_3$ and $Q_4$. Let $V$ denote the Fubini-Study measure of $(\P^2)^*$ normalized so that $\omega=\int [L] dV(L)$.

Up to moving slightly the subdivision, we assume that
\[ V(\{L,  \ \pi_{|f^{-n}(L)} \  \mathrm{has \ a \  critical \ value\ in} \ \partial Q_i \  \mathrm{ for \ some \ } i    \}) =0  \]
(indeed, for each $L$ there exists a finite number of critical value and we conclude using Fubini).  
Let $r_0/2<\rho<r_0$. We fix $L$  such that $\pi_{|f^{-n}(L)}$ has no critical value in $\partial Q_i$ for all $i$. We follow  \cite{thelin_laminaire} keeping the more precise estimates we need. 

We start with the geometric simplification of $C_n:= f^{-n}(L)$ (\cite{thelin_laminaire}[Paragraph 2.1]). Let 
\[\rho'\in [\rho+\frac{r_0 - \rho}{4}, r_0 - \frac{r_0 - \rho}{4} ] .\]
We denote by $\widetilde{C}_n$ the complex curve obtained by taking the union of $C_n \cap B(x_0,\rho)$ with the connected components of $C_n \cap \left( B(x_0, \rho')\backslash B(x_0, \rho)  \right)$ that meet $\partial B(x_0,\rho)$. By the maximum principle,  the boundary of $\widetilde{C}_n$ is contained in $\partial B(x_0,\rho')$ and if $\widetilde{B}_n$ denotes the number of its boundary components, we have by \cite{thelin_laminaire} that
\[ (\widetilde{B}_n -G_n) \left(\frac{r_0-\rho}{4} \right)^2 \leq A_n:= \mathrm{Area \ of } \ f^{-n}(L) \ \mathrm{ in}\ B(x_0,r_0) \leq d^n,  \]
where $G_n$ is the genus of $f^{-n}(L)$ in $B(x_0,r_0) $. Furthermore, by the coarea formula and letting $\rho'$ moves in $[\rho+\frac{r_0 - \rho}{4}, r_0 - \frac{r_0 - \rho}{4} ]$, we can find $\rho'$ such that $\widetilde{L}_n$, the length of the boundary of $\widetilde{C}_n$, satisfies
\[\widetilde{L}_n \leq \frac{2A_n}{r_0-\rho}. \]
Notice that $\rho'$ depends on $L$ but not the initial $\rho$. In addition, $\widetilde{C}_n$ coincides with $C_n$ in $B(x_0, \rho)$ by construction. Summing up, if $\widetilde{G}_n$ is the genus of   $\widetilde{C}_n$, we have
\begin{align*}
\widetilde{G}_n \leq G_n \leq C_2 d^n e^{n\delta/3}, \quad 
\widetilde{B}_n\leq \left(\frac{4}{r_0-\rho}\right)^2 d^n+  C_2 d^n e^{n\delta/3}, \quad 
\widetilde{L}_n \leq d^n \frac{2}{r_0-\rho}.  
\end{align*}
We now continue with the idea of \cite{thelin_laminaire}[Paragraph 2.2]. We fix a family  $Q$ of squares amongst the $(Q_i)_{i\leq 4}$. We can tile $C_0\backslash Q$ by crosses. Let $\Sigma$ be a component above a cross (for $\pi$). If $l(\Sigma)$ denotes the length of the relative boundary of $\Sigma$ and $a(\Sigma)$ the area of $\pi(\Sigma)$ counted with multiplicity, then we have, taking  $\varepsilon_k= e^{-2\delta n/3} $:
\[l(\Sigma)\geq \frac{\varepsilon_k}{k} = 4 e^{-\delta n } e^{-2\delta n/3} \quad \mathrm{or } \quad  l(\Sigma) \leq \frac{\varepsilon_k}{k} = 4 e^{-\delta n } e^{-2\delta n/3}. \]  
Using the isoperimetric inequality in the latter case implies
\[a(\Sigma)\geq (1-\varepsilon_k) \times \mathrm{Area \ of \ the \  cross }=\frac{ 3(1-\varepsilon_k) }{k^2}  \quad \mathrm{or } \quad a(\Sigma)\leq l(\Sigma)^2.  \]
As in \cite{thelin_laminaire}, we want to remove the components of that latter type. Let us denote them by $A_1, \dots, A_m$. If we remove them from $\widetilde{C}_n$, we change its area for $\pi^*(\omega)$ by at most 
\[\sum_{i=1}^m l_i^2\leq  \frac{1}{k} \sum  l_i \leq \frac{\tilde{L}_n}{k}.  \]
By the triangular inequality, the length of the relative boundary of the curve obtained by removing those components is $\leq 2 \widetilde{L}_n$. Finally, this curve still satisfies: 
\begin{align*}
\widetilde{G}_n \leq C_2 d^n e^{n\delta/3} \ \mathrm{and} \
\widetilde{B}_n\leq \left(\frac{4}{r_0-\rho}\right)^2 d^n+  C_2 d^n e^{n\delta/3}.
\end{align*}
We still denote by $\widetilde{C}_n$ that curve and we proceed with \cite{thelin_laminaire}[Paragraph 2.3]. Let $\mathcal{I}$ denote the set of islands above $Q$. We construct a graph where each vertex is a connected component above a cross and where we put as many edges between two vertices as the corresponding components share common arcs. Then, by \cite{thelin_laminaire}[Paragraph 1.2]
\[\# \mathcal{I}\geq \chi(\widetilde{C}_n)-s+a,  \]
where $s$ is the number of vertices and $a$ the number of edges of the graph. We now bound $s$ from above and $a$ from below, starting with $s$.

There are at most $ \frac{2\widetilde{L}_n k}{\varepsilon_k}$ vertices $\Sigma$ satisfying $l(\Sigma) \geq \frac{\epsilon_k}{k}$. Furthermore, the cardinality of vertices such that $a(\Sigma)\geq (1-\varepsilon_k)\frac{3}{k^2}$ is bounded from above by the area of the projection of $C_0 \backslash Q$ (counted with multiplicity) divided by $(1-\varepsilon_k)\frac{3}{k^2}$ hence it is bounded by
\[ \frac{3 S_n(C_0 \backslash Q )}{(1-\varepsilon_k)\frac{3}{k^2}} = \frac{k^2 S_n(C_0 \backslash Q )}{(1-\varepsilon_k)}  \]
where $S_n(C_0 \backslash Q )$ is the mean number leaves above $C_0 \backslash Q$. We thus have:
\[s \leq \frac{2\widetilde{L}_n k}{\varepsilon_k}+ \frac{k^2 S_n(C_0 \backslash Q )}{(1-\varepsilon_k)} . \]
We now bound $a$ from below. Following line by line \cite{thelin_laminaire}[Paragraph 2.3] gives:
\[a=\frac{1}{2} \sum_{\mathrm{vertices}} v(\Sigma) \geq 2 k^2 S_n(C_0 \backslash Q)-4hk\widetilde{L}_n. \]
Combining those bounds implies the following lower bound on $\# \mathcal{I}$
\begin{align*}
\# \mathcal{I} &\geq \chi(\widetilde{C}_n)- \frac{2\widetilde{L}_n k}{\varepsilon_k}- \frac{k^2 S_n(C_0 \backslash Q )}{(1-\varepsilon_k)}  + 2 k^2 S_n(C_0 \backslash Q)-4hk\widetilde{L}_n    \\
               &\geq   -2 \widetilde{G}_n-\widetilde{B}_n+ k^2 S_n(C_0 \backslash Q)(2-\frac{1}{1-\varepsilon_k}) -8h \frac{k}{\varepsilon_k}\widetilde{L}_n
\end{align*}
(we can always assume $h\geq 1$). Now, if we denote by $I_n(L)$ the total number of islands above the four families of squares $Q_1, Q_2, Q_3$ and $Q_4$, we have:   
\begin{align*}
I_n(L)&\geq   -8 \widetilde{G}_n-4\widetilde{B}_n-32h \frac{k}{\varepsilon_k}\widetilde{L}_n  + k^2 (2-\frac{1}{1-\varepsilon_k}) \sum_{i=1}^4 S_n(C_0 \backslash Q_i) \\
       & \geq  -8 \widetilde{G}_n-4\widetilde{B}_n-32h \frac{k}{\varepsilon_k}\widetilde{L}_n  + k^2 (2-\frac{1}{1-\varepsilon_k})4 S_n  
\end{align*}
where $S_n$ is the mean number of leaves above $C_0$. Let $a_n(L)$ denote the area of $\widetilde{C}_n$ in $B(x_0, \rho)$ for $\pi^*(\omega)$. We have $4S_n \geq a_n(L)- \frac{\widetilde{L}_n}{k} $ (we removed $A_1, \dots A_m$). Hence
\begin{align*}
I_n(L)&\geq   -12 C_2 d^ne^{ n \delta/3} -4\left( \frac{4}{r_0-\rho} \right)^2 d^n- \frac{32h k d^n}{\varepsilon_k} \frac{2}{r_0-\rho} \\
 &  \quad + k^2 ( 2-(1+2\varepsilon_k))( a_n(L)- \frac{2 d^n}{(r_0-\rho)k}   ) \\
     &\geq -h \frac{d^ne^{5 \delta n /3}}{(r_0-\rho)^2}C_2'+(1-2\varepsilon_k)k^2a_n(L)
\end{align*}
where we used that $\varepsilon_k=\exp(-2\delta n /3)$, $k= \exp(\delta n)/4$ and where $C_2'$ is a constant independent of $n$ and $L$. 

Amongst those islands, few are ramified. Indeed, assume $N_1$ of them are ramified, for those islands $\Delta$, the area of $\pi(\Delta)$ is $\geq 2\frac{1}{k^2}$ so
\begin{align*}
a_n(L) &\geq \frac{2}{k^2}N_1 +\frac{1}{k^2}(I_n(L)-N_1)= \frac{N_1}{k^2} + \frac{I_n(L)}{k^2} \\  
       &\geq \frac{N_1}{k^2} +(1-2\varepsilon_k)a_n(L)
        -h \frac{d^ne^{5 \delta n /3}}{k^2(r_0-\rho)^2}C_2'
\end{align*}
so
\[
N_1\leq 2\varepsilon_k k^2a_n(L)
+h \frac{d^ne^{5 \delta n /3}}{(r_0-\rho)^2}C_2'.
\]
Finally the number of islands of volume $\geq 1$ is at most $d^n$. In particular, if $I_n'(L)$ denotes the number of unramified islands of volume $\leq 1$, we have, replacing $h$ by $h C'_2$ if necessary.
\begin{equation}
\label{Numberofgoodislands}
I_n'(L) \geq (1-4\varepsilon_k)k^2 a_n(L) -2h \frac{d^ne^{5 \delta n /3}}{(r_0-\rho)^2} .
\end{equation}

\subsection{Control of size the image by $f^i$ of the above disks}\label{step2}
We now want to construct, in the above good islands, many disks whose size stay small when 
we push them forward by $f^i$ for $i=0,\dots, n-1$. Let $q$ be a square in the above family. In $q$, we consider the annulus $A:= D(t, \frac{1}{2k})\backslash D(t, \frac{1}{4k})$, where $t$ is the center of $q$: 
	\begin{center}
		\includegraphics[scale=0.2]{dessin} 
	\end{center}
Let $i\in \{0,\dots, n-1\}$ and $\Delta$ a good island above $q$. We shall use a length-area argument. The form $\omega_{|f^i(\Delta)}$ defines a metric $\beta$, we take the pull-back of this metric by $f^i$ then we push it forward by $\pi$ (it is a biholomorphism on $\Delta$). This gives a conformal metric $h_0=\sigma |dz|$ on $q$. Then
\begin{align*}
\mathrm{Area}_{h_0}(A)&= \int_{\frac{1}{4k}}^{\frac{1}{2k}} \int_0^{2\pi} \sigma^2(re^{i\theta})r d\theta dr \\
                      &\geq \int_{\frac{1}{4k}}^{\frac{1}{2k}}   \frac{1}{2\pi} \left( \int_0^{2\pi} \sigma(re^{i\theta})r d\theta \right)^2 dr\\
                      &\geq \frac{\log 2}{2\pi} \inf_{\gamma\in \Gamma} (l_{h_0}(\gamma))^2
\end{align*}
 where $ \Gamma$ is the set of circles of center $t$ and radius in $[1/(4k),1/(2k)]$ (essential curves of $A$). It implies the existence of an essential curve $\gamma$ such that
 \[\left( l_{h_0(\gamma)}    \right)^2\leq \frac{2\pi}{\log 2} \mathrm{Area}_{h_0}(A)=  \frac{2\pi}{\log 2} \mathrm{Area}_{\omega}(f^i(\Delta))  \]
where the area is counted with multiplicity and where $l_{h_0(\gamma)} $ is the length of $f^i(\pi_{|\Delta}^{-1}(\gamma))$ for the metric $\omega_{|f^i(\Delta)}$ i.e. $\omega_{|f^{-n+i}(L)}$.
 
 Let $\tilde{\Delta}$ be the part of $\Delta$ above the disk $D(t, 1/4k)$. 
By the Appendix of \cite{briendduval2}, we have:
\begin{align*} \mathrm{diameter \ of\ } f^i(\tilde{\Delta}) \ \mathrm{for\ }  \omega_{|f^{-n+i}(L)}&\leq \mathrm{diameter \ of\ } f^i( (\pi_{|\Delta})^{-1}(D_\gamma)) \ \mathrm{for\ }  \omega_{|f^{-n+i}(L)}                  \\
               &\leq 3 \sqrt{\frac{2\pi}{\log 2}} \sqrt{\mathrm{Area \ of\ } f^i(\Delta) \mathrm{\ counted \ with \ multiplicity}}
\end{align*}
where $D_\gamma$ is the disk in $q$ delimited by $\gamma$. 

We denote by $\Delta_1, \dots \Delta_M$ the $I'_n(L)$ good islands constructed at the first step and  by $\tilde{\Delta}_1, \dots \tilde{\Delta}_M$ the corresponding $\tilde{\Delta}$. The $\Delta_j$ are in $B(x_0,r_0)$ thus not in $U$. Since $\max_y \# \{z,  f^{i}(z)=y, \ z\notin U \}   \leq  C_2 d^i e^{i \delta/3} $ for all $i$, we have that the $f^i(\Delta_j)$ may recover themselves at most $C_2 d^i e^{i \delta /3}$. So the area of $f^i(\cup \Delta_j)$ is thus bounded from above by the area of $f^{-n+i}(L)$  (which is $d^{n-i}$) times $C_2 d^i e^{i \delta /3}$. In particular, the number of $\Delta_j$ such that the area of $f^i(\Delta_j)$ (counted with multiplicity) is greater than $\frac{\delta^2}{18\pi} \log 2$ is $\leq C_2 d^n \frac{e^{\delta i /3} 18\pi}{ \delta^2  \log 2}$ . If we remove those disks for $i=0,\dots, n-1$, we remove at most:
\begin{align*} d^n \frac{C_2 18 \pi}{ \delta^2 \log 2  }\sum_{i=0}^{n-1} e^{ i \delta /3} \leq d^n \frac{e^{n \delta/3}}{e^\delta -1}  C_2 \frac{18 \pi}{  \delta^2 \log 2 }= C_3 d^n e^{\delta n/3}
\end{align*}
where $C_3>1$ is another constant that does not depend on $n$ nor $L$. Using \eqref{Numberofgoodislands}, we deduce that the number $I''_n(L)$ of good islands constructed at the first step for which the area of $f^i(\Delta_j)$ (counted with multiplicity) is $\leq \frac{\delta^2}{18\pi} \log 2$ for all $i\leq n-1$ satisfies
\begin{equation}
\label{Numberofgoodislands2} 
I''_n(L) \geq I'_n(L)- C_3 d^n e^{\delta n/3} \geq (1-4\varepsilon_k)k^2 a_n(L) -3hC_3 \frac{d^ne^{5 \delta n /3}}{(r_0-\rho)^2}. 
\end{equation}
We denote $\Delta_1, \dots, \Delta_{M'}$ these islands and $\tilde{\Delta}_1, \dots, \tilde{\Delta}_{M'}$ the corresponding $\tilde{\Delta}$ (with our notations, $M'=I''_n(L)$). By construction, for each $1\leq j\leq M'$, $0\leq i \leq n-1$:
\[\mathrm{diam} f^i(\tilde{\Delta}_j) \leq 3 \sqrt{\frac{2\pi}{\log 2}} \frac{\delta}{\sqrt{18\pi} }\sqrt{\log 2}=\delta. \]

\subsection{Using Yomdin's estimate to produce $(n,\delta)$-separated disks}
Denote $\widetilde{f^{-n}(L)}$ the part of $f^{-n}(L)$ made with the $\tilde{\Delta}_1, \dots, \tilde{\Delta}_{M'}$. By construction, the area of $\pi(\widetilde{f^{-n}(L)})$  is
\begin{align*}
\pi \left(\frac{1}{4k}\right)^2 I''_n(L) &\geq   \frac{\pi}{16} (1-4\varepsilon_k)a_n(L) -\frac{\pi}{16k^2} 3hC_3 \frac{d^ne^{5 \delta n /3}}{(r_0-\rho)^2}    \\ 
                                         &=  \frac{\pi}{16} (1-4e^{-2 \delta n/3})a_n(L) -\pi 3hC_3 \frac{d^ne^{-\delta n /3}}{(r_0-\rho)^2}   
                                       \end{align*}
since $4k=e^{\delta n}$ and $\varepsilon_k=e^{-2\delta n/3}$ by definition. Using Proposition~\ref{Yomdin}, we have for every dynamical ball $B_n(x , 5\delta)$
\[  (f^n)^*\omega \wedge \pi^*\omega_0(B_n(x , 5\delta))\leq C_1 e^{\delta' n}            \]
thus (recall that $V$ denote the Fubini-Study measure of $(\P^2)^*$)
\[\int [\widetilde{f^{-n}(L)} ] \wedge \pi^*(\omega_0)  (B_n(x , 5\delta)) d V(L) \leq   \int [f^{-n}(L) ] \wedge \pi^*(\omega_0)  (B_n(x , 5\delta)) d V(L)  \leq C_1e^{\delta' n}. \]
Furthermore, integrating the previous estimate and using \eqref{page5}, we have for $n $ large enough
\begin{align*}
 \int [\widetilde{f^{-n}(L)} ] \wedge \pi^*(\omega_0)( \P^2(\C)) dV(L) &\geq   \frac{\pi}{32} \int a_n(L) dV(L) -\pi 3hC_3 \frac{d^ne^{-\delta n /3}}{(r_0-\rho)^2} \\
                                                       &\geq    \varepsilon_1 \frac{\pi}{32}d^n- \pi 3hC_3 \frac{d^ne^{-\delta n /3}}{(r_0-\rho)^2}
                                                       \geq \varepsilon_1 \frac{\pi}{64} d^n.
\end{align*}
In particular, we can find points $x_1, \dots, x_{N_1}$ with 
\[N_1\geq  \varepsilon_1 \frac{\pi}{64 C_1} d^n e^{-\delta'n} \]
which are $(n,5\delta)$-separated and belong to the support of  $\int [\widetilde{f^{-n}(L)} ] \wedge \pi^*(\omega_0) dV(L)$. In particular, for each $x_j$
  \[\int [\widetilde{f^{-n}(L)} ] \wedge \pi^*(\omega_0)(B_n(x_j,\delta)) dV(L)>0, \]
so for each $x_j$ we can choose a disk $\widetilde{\Delta}_j$ in $f^{-n}(\widetilde{L_j})$ ($L_j$ might depends on $j$) which meets $B_n(x_j, \delta)$. 
\begin{lemma}
	The disks $\widetilde{\Delta}_1, \dots, \widetilde{\Delta}_{N_1}$ are $(n,\delta)$-separated. 
\end{lemma}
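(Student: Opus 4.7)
The plan is a direct triangle-inequality argument using the three ingredients already assembled: the $(n,5\delta)$-separation of the points $x_j$, the fact that each $\widetilde{\Delta}_j$ meets the dynamical ball $B_n(x_j,\delta)$, and the diameter control $\mathrm{diam}\, f^i(\widetilde{\Delta}_j)\leq \delta$ for every $0\leq i\leq n-1$ obtained in Subsection~\ref{step2}.

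Fix $j\neq j'$. Since $\{x_1,\dots,x_{N_1}\}$ is $(n,5\delta)$-separated, there exists some $i\in\{0,\dots,n-1\}$ with $d(f^i(x_j),f^i(x_{j'}))\geq 5\delta$. Choose points $y_j\in \widetilde{\Delta}_j\cap B_n(x_j,\delta)$ and $y_{j'}\in \widetilde{\Delta}_{j'}\cap B_n(x_{j'},\delta)$, which exist by construction. For any $z\in \widetilde{\Delta}_j$ and $z'\in \widetilde{\Delta}_{j'}$, the triangle inequality gives
\[
d(f^i(z),f^i(z'))\geq d(f^i(x_j),f^i(x_{j'}))-d(f^i(x_j),f^i(y_j))-d(f^i(y_j),f^i(z))-d(f^i(x_{j'}),f^i(y_{j'}))-d(f^i(y_{j'}),f^i(z')).
\]
The first term on the right is $\geq 5\delta$. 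The second and fourth are each $<\delta$ because $y_j\in B_n(x_j,\delta)$ and $y_{j'}\in B_n(x_{j'},\delta)$. The third and fifth are each $\leq \mathrm{diam}\, f^i(\widetilde{\Delta}_j)\leq \delta$ and $\leq \mathrm{diam}\, f^i(\widetilde{\Delta}_{j'})\leq \delta$ respectively. Therefore $d(f^i(z),f^i(z'))\geq 5\delta-4\delta=\delta$, i.e.\ $\dist(f^i(\widetilde{\Delta}_j),f^i(\widetilde{\Delta}_{j'}))\geq \delta$.

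There is no serious obstacle here: the real work is already done in the previous subsections, where the factor $5$ in the $(n,5\delta)$-separation of the $x_j$ was chosen precisely to absorb the $2\delta$ coming from the dynamical balls and the $2\delta$ coming from the diameter bounds on the forward images of the disks. What one must be a bit careful about is ensuring that each $\widetilde{\Delta}_j$ is taken inside the correct preimage $f^{-n}(\widetilde{L_j})$ and that the diameter estimate of Subsection~\ref{step2} applies to it; but this is exactly the content of the selection procedure leading to \eqref{Numberofgoodislands2}.
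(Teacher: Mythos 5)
Your proof is correct and takes essentially the same route as the paper: both use the $(n,5\delta)$-separation of the $x_j$, the fact that each $\widetilde{\Delta}_j$ meets $B_n(x_j,\delta)$, and the diameter bound $\mathrm{diam}\,f^i(\widetilde{\Delta}_j)\leq\delta$, combined via the triangle inequality to obtain $5\delta - 2\delta - 2\delta = \delta$. The paper phrases the intermediate step as $f^l(\widetilde{\Delta}_i)\subset B(f^l(x_i),2\delta)$, but this is exactly your unrolled chain.
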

\begin{proof}
	Let $i\neq j$, $x_i$ and $x_j$ are $(n,5\delta)$-separated so there exists $l\leq n-1$ such that $d(f^l(x_i), f^l(x_j))\geq 5 \delta$. In subsection~\ref{step2}, we showed that $\mathrm{diam}(f^l(\widetilde{\Delta}_i)) \leq \delta$ and   $\mathrm{diam}(f^l(\widetilde{\Delta}_j)) \leq \delta$. In particular, $f^l(\widetilde{\Delta}_i) \subset B(f^l(x_i), 2\delta)$ (since $\widetilde{\Delta}_i$ intersects $B_n(x_i,5\delta)$) and $f^l(\widetilde{\Delta}_j) \subset B(f^l(x_j), 2\delta)$. Thus $\mathrm{dist}( f^l(\widetilde{\Delta}_i), f^l(\widetilde{\Delta}_j))\geq 5\delta - 2\delta-2\delta =\delta$.	
\end{proof}
The $\widetilde{\Delta}_j$ are $(n,\delta)$-separated disks and they are graphs above a disk of the type $D(0, e^{-\delta n })$ and the volume of each $\widetilde{\Delta}_j$ is $\leq 1 $ by the above. In conclusion, for $n$ large enough
\[
 \frac{1}{n} \log \left(\max  \{\# E, \ E\subset X_1^{\delta,n} \ \mathrm{is} \ (n,\delta)-\mathrm{separated}  \} \right) \geq \frac{1}{n} \log \left(\varepsilon_1 \frac{\pi}{64 C_1}\right) +\log d -\delta'.
\]
so 
\[ \varlimsup_{n\to \infty}  \frac{1}{n} \log \left(\max  \{\# E, \ E\subset X_1^{\delta,n} \ \mathrm{is} \ (n,\delta)-\mathrm{separated}  \} \right) \geq \log d -\delta'.
\]
As this is true $\forall \delta< \delta_0$, we can make $\delta \to 0$ in the above inequality so
\[ h_{\mathrm{top}}^1(f) \geq  \log d - \delta'.
\]
This is what we want as $\delta'$ can be taken arbitrarily small and $h_{\mathrm{top}}^1(f) \leq \log d$ by Theorem~\ref{theorem_majoration}. Theorem~\ref{generic} follows.

\subsection{$h_{\mathrm{top}}^1(f)=0$ for Lattès maps}\label{Lattes}
As mentioned in the introduction, the purpose of this section is to show that   $h_{\mathrm{top}}^1(f) =0 $ for a Lattès map $f$ of $\P^2$ of degree $d$. 
 Such map can be lifted to an affine map $D$ of linear part $\sqrt{d}U$, where $U$ is an isometry, on a torus $\mathbb{T}$ by a holomorphic branch cover $\sigma$ (\cite{Dupontlattes}). 
 
By contradiction, assume that $h_{\mathrm{top}}^1(f) =\alpha>0$. For $\delta>0$ small enough, we thus have
\begin{equation}
\label{ifnot} 
\varlimsup_{n\to \infty}  \frac{1}{n} \log \left(\max  \{\# E, \ E\subset X_1^{\delta,n} \ \mathrm{is} \ (n,\delta)-\mathrm{separated}  \} \right) \geq \frac{\alpha}{2}.
\end{equation} 

\begin{claim} 
For all $ n$ such  that 
	 \[\frac{1}{n} \log \left(\max  \{\# E, \ E\subset X_1^{\delta,n} \ \mathrm{is} \ (n,\delta)-\mathrm{separated}  \} \right)   \geq \frac{\alpha}{4}, \] 
	 there exist $\psi(n)$ with  \  $\frac{\alpha n}{9\log d} \leq \psi(n) \leq n$ and  $\Delta^1_n, \Delta^2_n \in  X_1^{\delta,n}$ such that:
\[d\left( f^{\psi(n)}(\Delta^1_n),f^{\psi(n)}(\Delta^2_n)  \right) \geq \delta 	.\]	
\end{claim}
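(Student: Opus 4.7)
The plan is to argue by contradiction. Suppose the claim fails for some $n$ satisfying the hypothesis: then for every integer $\psi$ with $\tfrac{\alpha n}{9\log d}\leq \psi\leq n$ and every pair $\Delta^1,\Delta^2\in X_1^{\delta,n}$, one has $d(f^\psi(\Delta^1),f^\psi(\Delta^2))<\delta$. I fix a maximal $(n,\delta)$-separated set $E\subset X_1^{\delta,n}$, which by the hypothesis of the claim has cardinality $N\geq e^{\alpha n/4}$, and for each $W\in E$ I pick the distinguished point $x_W\in W$ furnished by the definition of $X_1^{\delta,n}$.

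The first key observation is that, since $x_W\in W$ implies $f^i(x_W)\in f^i(W)$, we have $d(f^i(x_W),f^i(x_{W'}))\geq d(f^i(W),f^i(W'))$ for every $i$, so the point set $\{x_W:W\in E\}\subset\P^2$ is still $(n,\delta)$-separated in the usual pointwise sense. Under the contradiction assumption, any two distinct $W,W'\in E$ must separate at some time $i<\tfrac{\alpha n}{9\log d}$; setting $m:=\lceil\tfrac{\alpha n}{9\log d}\rceil$, the set $\{x_W:W\in E\}$ is then $(m,\delta)$-separated in $\P^2$.

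Next I would invoke the classical equality $h_{\mathrm{top}}(f)=2\log d$, valid for every holomorphic endomorphism of $\P^2$ of degree $d$ (and in particular for a Latt\`es map). The standard finite-time upper bound for pointwise separated sets then gives $N\leq C_\delta\, e^{(2\log d+\varepsilon)m}$ for any $\varepsilon>0$ and all $m$ large enough. Plugging in $m\leq \tfrac{\alpha n}{9\log d}+1$ and choosing $\varepsilon<\tfrac{\log d}{4}$ yields
\[\log N\leq \alpha n\Bigl(\tfrac{2}{9}+\tfrac{\varepsilon}{9\log d}\Bigr)+O(1),\]
and since $\tfrac{2}{9}<\tfrac{1}{4}$ this is strictly less than $\tfrac{\alpha n}{4}$ for $n$ sufficiently large, contradicting $N\geq e^{\alpha n/4}$.

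The only delicate point -- really the heart of the argument -- is the exponent arithmetic: the constant $\tfrac{1}{9\log d}$ is tuned precisely so that $2\log d\cdot\tfrac{1}{9\log d}=\tfrac{2}{9}<\tfrac{1}{4}$, leaving just enough room for the finite-time slack $\varepsilon$ in the entropy estimate. Observe that no Latt\`es-specific input (in particular the lift of $f$ to the torus $\mathbb{T}$ via $\sigma$) is used in the proof of the claim itself; the torus structure will only intervene \emph{after} the claim, to turn the existence of this deeply separated pair into the final contradiction with $h_{\mathrm{top}}^1(f)=\alpha>0$.
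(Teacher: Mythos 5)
Your proof is correct and follows essentially the same route as the paper: both argue by contradiction, pick a point $x_W$ in each element $W$ of a maximal $(n,\delta)$-separated family $E\subset X_1^{\delta,n}$, observe via $d(f^i(x_W),f^i(x_{W'}))\geq d(f^i(W),f^i(W'))$ that these points form an $(m,\delta)$-separated set in the usual pointwise sense with $m\approx\frac{\alpha n}{9\log d}$, and then contradict the classical bound $h_\top(f)\leq 2\log d$ via the exponent arithmetic $\frac{2}{9}<\frac14$. Your finite-time reformulation (fixing one $n$ and extracting a contradiction for $n$ large, rather than phrasing the contradiction through a $\varlimsup$) is a minor and arguably cleaner repackaging of the paper's argument, and you correctly note that only the claim for $n$ sufficiently large is needed downstream.
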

Indeed, if the claim does not hold then we could take arbitrarily large $n$ for which the disks in $X_1^{\delta,n}$ are not $\delta$-separated for $f^{m}$ for any $\frac{\alpha n}{9\log d} \leq m \leq n $. By \eqref{ifnot}, we have arbitrarily large $n$ such that 
\[ \max  \{\# E, \ E\subset X_1^{\delta,n} \ \mathrm{is} \ (n,\delta)-\mathrm{separated}  \} \  \geq e^{\frac{\alpha n}{4}}  . \]
Thus for such $n$ and such $E$ realizing the above maximum, the $\delta$-separation between the elements of $E$ is achieved for $m \leq \frac{\alpha n}{9\log d}$. For $W\in E$, we pick $x_W \in W$. Then observe that the collection of such $(x_W)_{W\in E}$ defines a $(\frac{ \alpha n}{9\log d},\delta)$-separated set which has the same cardinality than $E$. Thus:
\begin{align*}
\varlimsup  \frac{9\log d}{ \alpha n}  \log \max\{ \# E',\ E'\subset \P^2 \ \mathrm{is} \ \left(\frac{ \alpha n}{9\log d},\delta\right)-\mathrm{separated} \} &\geq \frac{\alpha}{4} \frac{9\log d}{\alpha}\\
  &> 2\log d.
\end{align*}
This contradicts the fact that $h_\top(f)\leq 2\log d$. \\

By the claim, there exists a sequence $(n_k)_k$ of integers with $n_k\to \infty$ such that for all $k$ we can find two elements 
$\Delta^1_{n_k}, \Delta^2_{n_k} \in  X_1^{\delta,n_k}$ such that:
\[d\left( f^{\psi(n_k)}(\Delta^1_{n_k}),f^{\psi({n_k})}(\Delta^2_{n_k})  \right) \geq \delta.\] 
For each $i$, there exists $Z_{i,n_k}\in \Delta^i_{n_k}$ such that $\Delta^i_{n_k}$ is analytic in $B(Z_{i,n_k}, e^{-\delta  n_k})$. Let $\hat{\Delta}^i_{n_k}$ be the restriction of $\Delta^i_{n_k}$ to the ball $B(Z_{i,n_k}, e^{-\delta  n_k}/2)$.  

Consider the lifts $\sigma^{-1}(\hat{\Delta}^i_{n_k})$ for $i=1,2$. Since $\sigma$ is holomorphic, $\sigma^{-1}(\hat{\Delta}^i_{n_k})$ is analytic in some ball $B(x_{i,n_k}, C e^{-{n_k}\delta})$ for some constant $C>0$ that depends on $\sigma$ (but not on ${n_k}$) and where $x_{i,n_k} \in \sigma^{-1}(\hat{\Delta}^i_{n_k})$. We denote $ T^i_{n_k}:= \sigma^{-1}(\hat{\Delta}^i_{n_k})\cap B(x_i, C e^{- {n_k}\delta})$.

We want to bound from above the volume of  $ T^i_{n_k}$. Let $\Omega$ be the metric preserved by $U$ and $\omega_{\mathrm{FS}}$ the Fubini-Study form on $\P^2$. Then, up to multiplying $\Omega$ by a suitable constant, the Green current $T^+$ of $f$ is given by $\sigma_*(\Omega)$ (see \cite{BertelootLoeb}[Proposition 4.1]) and it can be written as $T^+=\sigma_*(\Omega) = \omega + dd^cg$ where $g$ is a $\beta$-Hölder function (the Green current). Then
\[\mathrm{Vol}_\Omega (T^i_{n_k})=\int_{ T^i_{n_k}} \Omega \leq  \int_{\hat{\Delta}^i_{n_k}} \sigma_*\Omega.              \]
Let $\theta_{i,n_k}$ be a smooth cut-off function equal to $1$ in $B(Z_{i,n_k}, e^{-\delta  n_k}/2)$ and $0$ outside $B(Z_{i,n_k}, e^{-\delta  n_k})$ such that $0 \leq  \pm dd^c \theta_{i,n_k} + C e^{2 \delta n_k}\omega $ (changing $C$ if necessary). In particular, by Stokes and the definition of $X_1^{\delta,n_k}$
\begin{align*}
\mathrm{Vol}_\Omega ( T^i_{n_k}) &\leq  \int_{\Delta^i_{n_k}} \theta_{i,n_k} (\omega+dd^c g ) \\
  &\leq   \int_{\Delta^i_{n_k}} \omega + \int_{\Delta^i_{n_k}} (g-g(Z_{i,n_k})) dd^c\theta_{i,n_k}   \\
  &\leq 1+ C e^{-\beta \delta n_k } e^{2 \delta n_k} .
\end{align*} 
In particular, for a suitable $p>0$, we have that $T^i_{n_k}$ has volume $\leq C e^{p\delta n_k}$ (again, changing $C$ if necessary).

Let $\rho_{i,{n_k}}$ be a smooth cut-off function equal to $1$ in $B(x_{i,n_k}, \frac{C}{2} e^{- {n_k} \delta})$ and equal to $0$ outside $B(x_{i,n_k}, C e^{- {n_k}\delta})$. We can define it so it satisfies
 \[ \sqrt{-1}\partial \rho_{i,{n_k}} \wedge \bar{\partial} \rho_{i,{n_k}} \leq C' e^{2\delta n_k} \Omega   \]
  where $C'$ is another constant that does not depends on ${n_k}$. For each $i$, we define the positive $(1,1)$ current $S^i_{n_k}$, using the above notation
\[ S^i_{n_k} := \frac{\rho_{i,{n_k}}}{a_{i,n_k}} \left[  T^i_{n_k} \right]\]
where $a_{i,n_k}$ is a constant chosen so that $ S^i_{n_k} $ has mass $1$. By Lelong's inequality, we know that $a_{i,n_k} \geq e^{-2 n_k \delta } C^2\pi/4 $.

 We claim that, up to extracting, 
\[ d^{-\psi(n_k)} (D^{\psi(n_k)})_*( S^i_{n_k} ) \to R_i\]
where $R_i$ is a positive closed $(1,1)$ current of mass $1$ in $\mathbb{T}$. As $\Omega$ is the metric preserved by $U$, then $d^{-1}D^*(\Omega)=\Omega$ thus  $d^{-\psi(n_k)} (D^{\psi(n_k)})_*( S^i_{n_k} ) $ has mass $1$ and we can extract a converging subsequence for both $i=1$ and $i=2$. To show that the limit $R_i$ of such subsequence is closed, it is enough to show that it is $\partial$-closed so we only have to test against forms of the type $\chi \bar{\partial} \bar{Z}$ where  $\chi$ is a smooth function and $\bar{\partial} \bar{Z}$ some $(0,1)$-form with constant coefficients and norm $1$. As $D$ has linear part $\sqrt{d}U$, we have that $ (D^{\psi(n_k)})^*(\bar{\partial} \bar{Z})$ is again some $1$-form with constant coefficients and norm $ d^{\psi(n_k)/2}$ so we write it as $ d^{\psi(n_k)/2} \bar{\partial} \bar{Z}_{n_k} $. Now, 
\begin{align*}
  \left \langle \partial( d^{-\psi(n_k)} (D^{\psi(n_k)})_*( S^i_{n_k} )), \chi \bar{\partial} \bar{Z} \right\rangle =   &\frac{d^{-\psi(n_k)} }{a_{i,n_k}}\left \langle  \partial\rho_{i,{n_k}} \wedge \left[  T^i_{n_k} \right], \chi\circ D^{\psi(n_k)}  (D^{\psi(n_k)})^*(\bar{\partial} \bar{Z}) \right\rangle \\
 =  &\frac{d^{-\psi(n_k)/2} }{a_{i,n_k}}\left\langle \left[  T^i_{n_k} \right], \chi\circ D^{\psi(n_k)} \partial\rho_{i,{n_k}} \wedge \bar{\partial} \bar{Z}_{n_k} \right\rangle .
  \end{align*}
By Cauchy-Schwarz inequality and the properties of $\rho_{i,{n_k}}$, we deduce
\begin{align*}
\left|\left\langle \partial( d^{-\psi(n_k)} (D^{\psi(n_k)})_*( S^i_{n_k} )), \chi \bar{\partial} \bar{Z} \right\rangle\right|^2 &\leq \frac{d^{-\psi(n_k)} }{a^2_{i,n_k}}\left\langle  \left[  T^i_{n_k} \right], \sqrt{-1}\partial \rho_{i,{n_k}} \wedge \bar{\partial} \rho_{i,{n_k}} \right\rangle \\
  \ \ &\times \left\langle  \left[  T^i_{n_k} \right], |\chi\circ D^{\psi(n_k)}|^2 \sqrt{-1}\partial Z_{n_k} \wedge \bar{\partial} \bar{Z}_{n_k} \right\rangle\\
  &\leq \frac{\|\chi\|^2_\infty d^{-\psi(n_k)} }{a^2_{i,n_k}}\left\langle \left[  T^i_{n_k} \right],C' e^{2 \delta {n_k}} \Omega\right\rangle \times \left\langle \left[  T^i_{n_k} \right],  \Omega \right\rangle\\
   &\leq C''\|\chi\|^2_\infty d^{-\psi(n_k)+ (6+2p) \delta n_k} \\
   &\leq  C''\|\chi\|^2_\infty d^{\left(-\frac{\alpha }{9 \log d}+ (6+2p) \delta\right) n_k} 
\end{align*}
where $C''$ is another constant that does not depends on $n_k$. In particular, for $\delta$ small enough that quantity converges to $0$ so $R_1$ and $R_2$ are closed. 

By Bézout, we can take $Z \in \supp(\sigma_*(R_1))\cap \supp(\sigma_*(R_1))$. Take $\theta$ a smooth non zero form with support in $B(Z,\delta/4)$ and so that  $\langle \sigma_*(R_1), \theta \rangle \neq 0$ and $\langle \sigma_*(R_2), \theta \rangle \neq 0 $. Then, for $n_k$ large enough, we have that  $\langle \sigma_*(d^{-\psi(n_k)} (D^{\psi(n_k)})_*( S^i_{n_k} )), \theta \rangle \neq 0$ for $i=1,2$. Since $\sigma_*(d^{-\psi(n_k)} (D^{\psi(n_k)})_*( S^i_{n_k} ))$ has support in $f^{{\psi(n_k)}}(\Delta^i_{{\psi(n_k)}})$, this contradicts the fact that   
$d\left( f^{\psi(n_k)}(\Delta^1_{n_k}),f^{\psi(n_k)}(\Delta^2_{n_k})  \right) \geq \delta $. Hence $h^1_\top(f) =0$.

\end{document}